\newtheorem{theorem}{Theorem}
\newtheorem{corollary}[theorem]{Corollary}
\newtheorem*{problem*}{Open problem}
\newtheorem{claim}[theorem]{Claim}
\newtheorem{observation}[theorem]{Observation}
\newtheorem{lemma}[theorem]{Lemma}
\title{Computing the vertex connectivity of a locally maximal 1-plane graph in linear time
}
\author{Therese Biedl and Karthik Murali \thanks{
  David R. Cheriton School of Computer Science, University of Waterloo, Waterloo, Canada 
  \texttt{\{biedl,k3murali\}@uwaterloo.ca}.  Work of TB supported by  NSERC, FRN RGPIN-2020-03958.
}
}
\begin{document}
\maketitle

\begin{abstract}
It is known that the vertex connectivity of a planar graph can be computed in linear time. We extend this result to the class of locally maximal 1-plane graphs: graphs that have an embedding with at most one crossing per edge such that the endpoints of each pair of crossing edges induce the complete graph $K_4$.
\end{abstract}

\graphicspath{{Figures/}}

\section{Introduction}

One of the oldest problems in graph algorithms is the \emph{connectivity problem}: Given a connected graph $G$, what is the smallest number of vertices (denoted by $\kappa(G)$) that need to be removed from $G$ to make it disconnected? This has applications in security and network reliablity; it is hence important to be able to test efficiently whether $\kappa(G)$ is sufficiently large and to find a corresponding \emph{minimum separating set}, i.e., a smallest set of vertices whose removal disconnects the graph.

Let $G$ be a graph with $n$ vertices and $m$ edges. It is very easy to test in linear time (i.e. $O(m+n)$-time) 
whether $\kappa(G)\geq 1$ by running any graph traversal algorithm, and $\kappa(G)\geq 2$ can easily be tested in linear time by modifying a depth-first search \cite{tarjan1972depth}.  Testing whether $\kappa(G)\geq 3$ in linear time is harder; an algorithm for this was proposed by Hopcroft and Tarjan in 1973 \cite{hopcroft1973dividing}, but it had some errors which were pointed out and corrected in 2000 by Gutwenger and Mutzel \cite{DBLP:conf/gd/GutwengerM00}. For the general question of determining the connectivity of a graph, for the longest time the fastest known algorithms ran in $O(n^2)$ time. For the simplest case when $m \in O(n)$, and hence $\kappa(G) \in O(1)$, Kleitman showed how to determine the connectivity in $O(n^2)$ time  \cite{kleitman1969methods}. For $\kappa(G)=4$ and any $m$, the first $O(n^2)$ algorithm was by Kanevsky and Ramachandran \cite{DBLP:journals/jcss/KanevskyR91}. 
For $\kappa(G) \in O(1)$ and any $m$, the first $O(n^2)$ algorithm was by Nagamochi and Ibaraki \cite{DBLP:journals/algorithmica/NagamochiI92}. 

The last few years have seen some breakthroughs for computing vertex connectivity. Recently, Forster et al.\ \cite{DBLP:conf/soda/ForsterNYSY20} used fast local-cut algorithms to show that when $\kappa(G) \in O(\text{polylogn})$, there is a randomised algorithm that takes time $\tilde{O}(m + nk^3)$\footnote{%
$\tilde{O}$ hides poly-logarithmic factors, $\hat{O}$ hides arbitrarily small polynomial factors.}
to compute vertex connectivity, and hence is nearly linear. As for deterministic algorithms, when $k := \kappa(G) < n^{1/8}$, Gao et al.\ \cite{DBLP:journals/corr/abs-1910-07950Gao} used 
balanced sparse cuts to give a sub-quadratic time algorithm for computing vertex connectivity; the running time is $\widehat{O}(m + \min\{n^{1.75}k^{1+k/2}, n^{1.9}k^{2.5}\})$\addtocounter{footnote}{-1}\footnotemark.

In this paper, we consider the connectivity problem for special kinds of graph classes.  Our work was motivated by a result for \textit{planar graphs}, i.e., graphs that can be embedded on the plane such that no edge crosses another. These graphs have been extensively studied, and their structural properties have been used in the development of many efficient algorithms.
Any simple planar graph has at most $3n-6$ edges, therefore any planar graph $G$ contains a vertex with at most five distinct neighbours, and so $\kappa(G)\leq 5$. The recent results on vertex connectivity show that there exists a near-linear-time randomized algorithm to determine the connectivity of a planar graph. But this result is actually much older:  Eppstein \cite{DBLP:journals/jgaa/Eppstein99} gave in 1999
a deterministic linear-time algorithm to test whether the vertex connectivity of a planar graph is at least $k$ (which by $\kappa(G)\leq 5$ gives a linear-time algorithm to determine the connectivity). This is based on the following approach. Given a planar graph $G$, let the \emph{radialisation} $\Lambda(G)$ be the planar graph obtained by adding a new face vertex inside each face of $G$ and connecting the face vertex to all the vertices on the boundary of the face. Eppstein credits Nishizeki with the observation that any minimum separating set $S$ corresponds to a separating cycle $X$ in $\Lambda(G)$ that separates two vertices of $G$,%
\footnote{Eppstein actually states the condition slightly differently, demanding $X$ to separate two vertices of $G$ in $\Lambda(G)\setminus E(G)$, but we believe this to be incorrect, since the two separated vertices could then be connected via an edge in $G$.} 
has length $2|S|$, visits all vertices of $S$, and uses only edges added during the radialisation.
Determining the connectivity of a planar graph $G$ hence reduces to finding cycles of length $2k$ (for $k=1,2,3,4$) in $\Lambda(G)$ that satisfy these properties; this can be done by modifying Eppstein's subgraph isomorphism testing algorithm slightly \cite{DBLP:journals/jgaa/Eppstein99}.

Our goal in this paper is to generalize these results to a \emph{1-planar graph}, i.e., a graph that can be embedded in the plane with at most one crossing per edge. This graph class was first introduced by Ringel \cite{ringel1965sechsfarbenproblem} and has excited much interest recently, both with respect to the theoretical properties of these graphs and for developing algorithms tailored to this graph class.  See a 2017 overview paper \cite{KLM17} as well some chapters in a recent book \cite{hong2020beyond} for more details.  To our knowledge, no previous results concerning connectivity-testing in 1-planar graphs have appeared.  Any simple 1-planar graph has at most $4n-8$ edges \cite{Bodendiek1983BemerkungenZE}, and therefore trivially $\kappa(G)\leq 7$.  With the recent breakthrough results, there exists a randomized (but complicated) algorithm to determine the connectivity of a 1-planar graph in near-linear time.  But our aim is to instead extend the approach by Eppstein, and therefore gain more insight into the structure of a 1-planar graph, as well as a deterministic linear-time algorithm for the connectivity of a 1-planar graph.

\begin{figure}
  \centering
  \begin{subfigure}[b]{0.4\textwidth}
    \centering
    \includegraphics[width=0.75\textwidth]{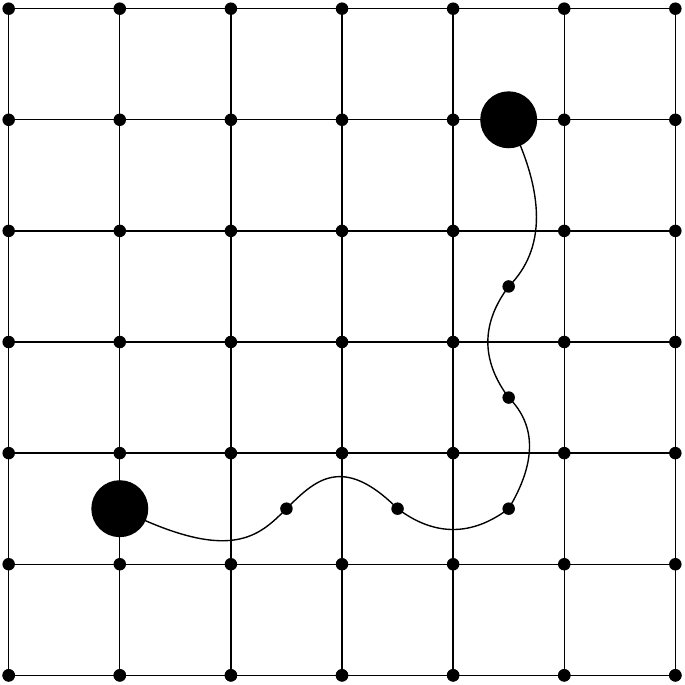}
     \caption{}
    \label{subfig: grid graph}
  \end{subfigure}\hspace{0.1\textwidth}
  \begin{subfigure}[b]{0.4\textwidth}
    \centering
    \includegraphics[width=\textwidth]{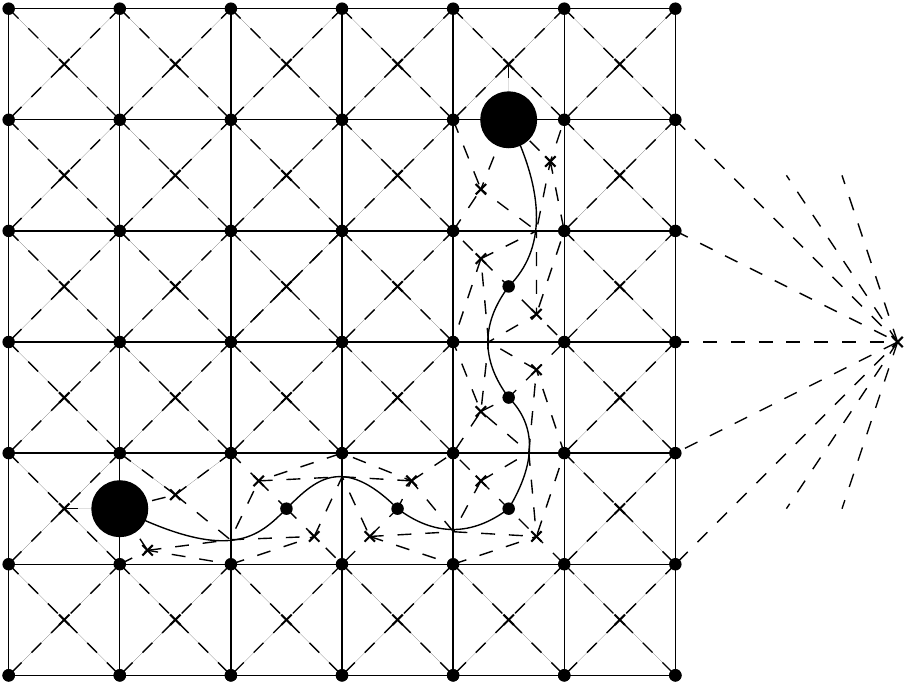}
    \caption{}
    \label{subfig: radial planarisation of the grid graph}
  \end{subfigure}
  \caption{An example to illustrate that a minimum separating of a 1-plane graph $G$ does not always correspond to a bounded length separating cycle in $\Lambda(G)$.}
  \label{fig: grid graph and its planarisation}
\end{figure}

For 1-planar graphs, we define the analogue of radialisation, called \textit{radial planarisation}, where we first planarise the graph by adding dummy vertices at crossing points, and then radialise.
Observe that the straight-forward approach of finding separating cycles in the radial planarisation $\Lambda(G)$ does not always work. Consider Figure \ref{subfig: grid graph} and the two bold vertices, which form a minimum separating set $S$. In $\Lambda(G)$ (Figure \ref{subfig: radial planarisation of the grid graph}), there is no cycle of length $2|S|=4$ that contains these two vertices, and the example can easily be extended to show that in fact no cycle of length $O(1)$ goes through both vertices of $S$. So not every minimum separating set can give rise to a short separating cycle in $\Lambda(G)$. However, as we will show, the approach via radial planarisation \emph{does} work for a subclass of 1-planar graphs. Specifically, call a 1-planar graph \emph{locally maximal} \cite{FHM+20} if it has a 1-planar embedding in the plane where the endpoints of any crossing induce the complete graph $K_4$. As our main result in this paper, we show that for such 1-planar graphs, minimum separating sets indeed correspond to separating cycles (with some other constraints) in $\Lambda(G)$. As such, testing connectivity again reduces to subgraph isomorphism (with some other constraints), and since $\Lambda(G)$ is planar, this can be tested in linear time.

\section{Preliminaries}

(We assume that the reader is familiar with basic concepts of graph theory. See e.g. \cite{diestel2018graph}.) Let $G = (V,E)$ be a graph, which we assume to have no loops since they are irrelevant for connectivity. A \textit{drawing} of $G$ on the plane maps each vertex to a distinct point and each edge to a distinct curve on the plane. We assume that all graph drawings are \textit{good} \cite{Schaefer2013}, i.e., each edge is drawn as a simple non-self-intersecting arc connecting its endpoints, and any two edges intersect at most once, either at a common endpoint or in the interior of the edges. A \textit{planar graph} is a graph that can be drawn on the plane such that no two edges cross; such a drawing is called a \textit{planar drawing}. A planar drawing divides the plane into connected regions called \textit{faces}. A face is identified by its \textit{facial circuit}, which is a directed walk along the boundary of the face such that the face lies to the left of all the edges in the directed walk. This set of facial circuits is called a \textit{planar embedding}. All drawings of a planar graph with the same embedding are \textit{equivalent}, and are said to \textit{respect} the planar embedding. A planar graph with a given planar embedding is called a \textit{plane graph}. 

Let $G$ be a plane graph with a drawing that respects the planar embedding, and let $v$ be a vertex.
The \textit{rotation} at $v$, denoted by $\rho(v)$, is the clockwise sequence of edges incident with $v$. 
If a set of edges $(v,w_1), \dots, (v,w_k)$ incident with $v$ occur in the same order (not necessarily consecutive) in $\rho(v)$, we write $\langle (v,w_1), (v,w_2), \dots, (v,w_k) \rangle \subseteq \rho(v)$.
An \textit{angle} at $v$ is a sequence $\langle u, v, w \rangle$, where $(v,w)$ and $(v,u)$ are consecutive edges in $\rho(v)$. 
A \emph{bigon} is a closed face that contains exactly two angles.
The \textit{rotation system} is the set of rotations at all the vertices of $G$. 

A \textit{crossing} in a drawing of a graph is a pair of edges that intersect at a point that is not a vertex. The point of intersection is called the \textit{crossing point}. An \textit{endpoint of a crossing} is a vertex that is incident with either edge of the crossing. 
Since all graph drawings are good, 
each crossing has four distinct endpoints. 

A \textit{1-planar graph} is a graph that can be drawn on the plane such that each edge is crossed at most once by another edge; such a drawing is called a \textit{1-planar drawing}. A 1-planar graph on $n$ vertices has $O(n)$ edges and crossing points \cite{Bodendiek1983BemerkungenZE, pach1997graphs}. For a given 1-planar drawing of a graph $G$, the \textit{planarisation} of $G$, denoted by $G^\times$, is obtained by placing a \textit{dummy vertex} at each crossing point so that the drawing becomes planar. The set of all facial circuits of a planarised 1-planar drawing is called a \textit{1-planar embedding}. Given a planar graph, one can obtain a planar embedding in linear time (see e.g.~\cite{haeupler2008planarity} and the references therein). In contrast, testing 1-planarity is NP-hard \cite{DBLP:journals/algorithmica/GrigorievB07, korzhik2013minimal}.
Hence for algorithmic purposes, we must assume that a 1-planar graph comes with a given 1-planar embedding; such a graph is called a \textit{1-plane graph}.

\begin{figure}
  \centering
  \begin{subfigure}[b]{0.4\textwidth}
    \centering
    \includegraphics[width=0.85\textwidth]{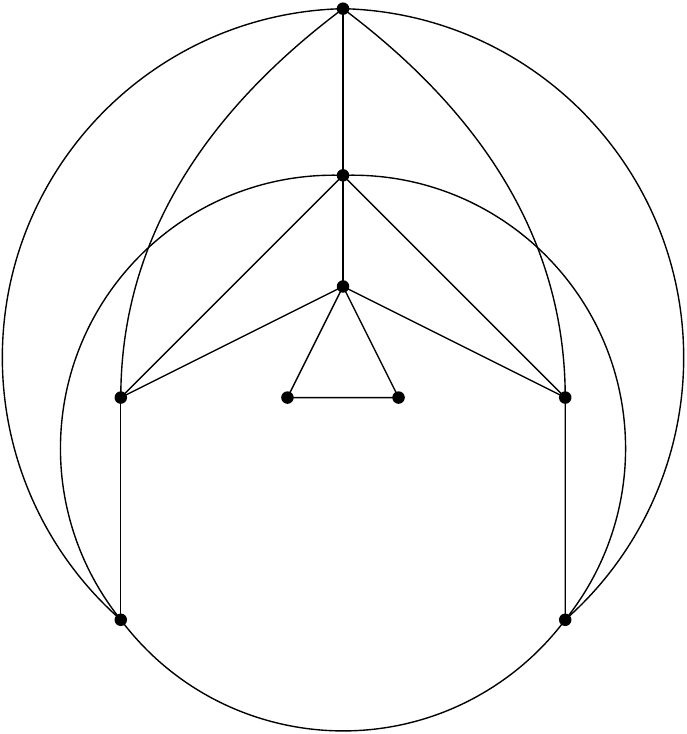}
     \caption{}
    \label{subfig: 1-planar graph}
  \end{subfigure}\hspace{0.1\textwidth}
  \begin{subfigure}[b]{0.4\textwidth}
    \centering
    \includegraphics[width=0.9\textwidth]{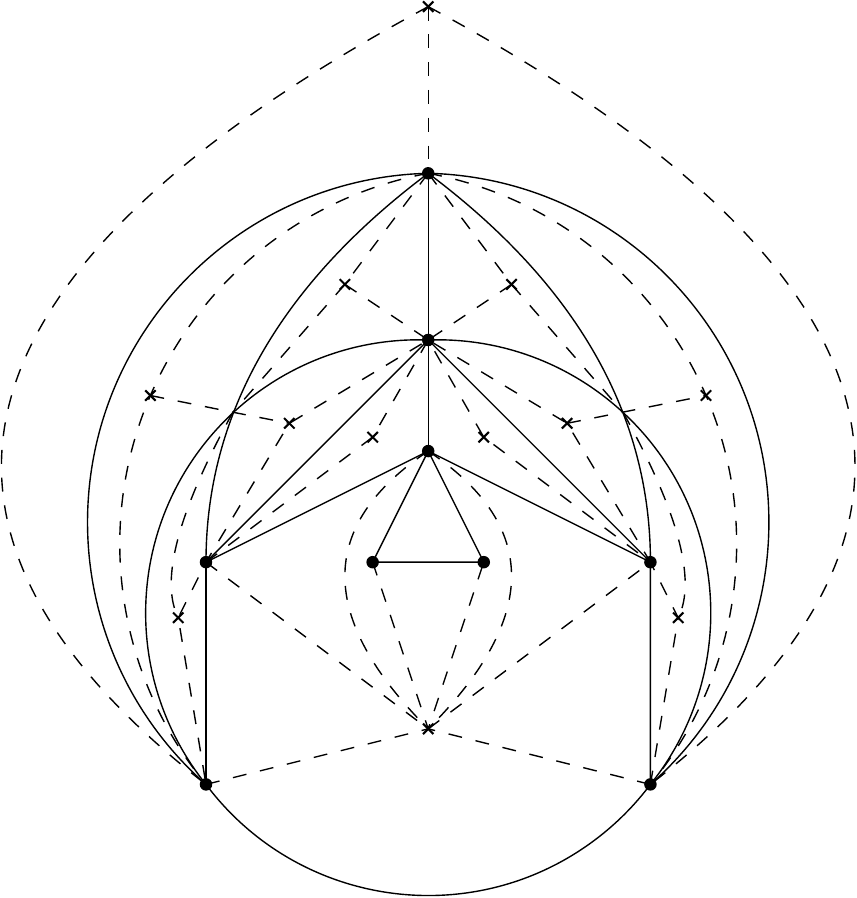}
    \caption{}
    \label{subfig: radial planarisation}
  \end{subfigure}
  \caption{A locally maximal 1-plane graph and its radial planarisation.}
  \label{fig: 1-plane graph and radial planarisation}
\end{figure}

A \emph{locally maximal 1-plane graph} is a 1-plane graph $G$ in which the endpoints of each crossing induce a $K_4$ (Figure \ref{subfig: 1-planar graph}). Thus, if
$\{(u,v), (w,x)\}$ is a crossing in $G$ then all edges $(u,x)$, $(x,v)$, $(v,w)$, $(w,u)$ exist somewhere in $G$. Let $c$ be the crossing point of this crossing.  If edge $e=(u,x)$ is such that $\{u,x,c\}$ bounds a face of $G^\times$, then $e$ is called a \textit{kite edge} of the crossing; the corresponding face is called a \textit{kite face} of the crossing. Note that our definition of locally maximal 1-plane does not require all crossings to have kite edges. However, if edge $e=(u,x)$ does not bound a face incident to $c$, then we can always add an uncrossed edge $e'$ parallel to $e$ (i.e., with the same endpoints $u,x$) to get a kite face $\{u,x,c\}$. Adding parallel edges do not affect the vertex connectivity of a graph, and kite edges can be added in $O(n)$ time since a 1-planar graph has $O(n)$ crossings.  Hence, for the purposes of connectivity testing, we can consider ``locally maximal 1-plane'' to be equivalent to ``has a 1-plane drawing where all crossings have all four kite edges.''

We assume throughout the paper that the input graph $G$ is connected.
A \textit{separating set} of $G$ is a set $S$ of vertices such that $G-S$ is disconnected. Each of the connected components of $G-S$ is called a \textit{flap}. A \textit{separating cycle} is a cycle that is a separating set. A \textit{minimum separating set} is a separating set of minimum cardinality. The size of a minimum separating set is also called the \textit{vertex connectivity} of the graph, and is denoted by $\kappa(G)$. A separating set $S$ is a \textit{minimal separating set} if $S-\{v\}$ is not a separating set for any $v \in S$. The following useful observation on minimal separating sets is easy to verify:

\begin{observation}\label{obs: min sep sets}
Each vertex of a minimal separating set $S$ has a neighbour in each flap of $G-S$.
\end{observation}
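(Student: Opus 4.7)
The plan is to prove the contrapositive via a single application of the minimality hypothesis. Suppose, for contradiction, that some vertex $v \in S$ has no neighbour in some flap $F$ of $G-S$. I will show that $S' := S \setminus \{v\}$ is already a separating set, contradicting the fact that $S$ is minimal.

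To do so, I would consider the graph $G - S'$, whose vertex set is $\{v\} \cup V(F_1) \cup \dots \cup V(F_k)$, where $F_1, \dots, F_k$ (with $k \geq 2$) are the flaps of $G-S$. The key observation is that $F$ remains separated from the rest in $G - S'$. Indeed, any path in $G$ from a vertex of $F$ to a vertex outside $F$ must leave $F$ via an edge to $S$ (since $F$ is a connected component of $G-S$). In $G-S'$ the only vertex of $S$ that remains is $v$, and by assumption $v$ has no neighbour in $F$. Hence no edge of $G-S'$ has one endpoint in $F$ and the other outside $F$.

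Since $k \geq 2$, the set $\{v\} \cup \bigcup_{j \neq i} V(F_j)$ is nonempty, so $G - S'$ is disconnected, which means $S'$ is a separating set strictly smaller than $S$. This contradicts the minimality of $S$ and completes the proof.

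The argument is routine and there is no real obstacle; the only thing to be slightly careful about is that the ``other side'' of the separation in $G-S'$ is nonempty, which is guaranteed because a separating set produces at least two flaps.
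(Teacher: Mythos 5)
Your proof is correct and is exactly the standard argument one would expect here: the paper states this observation without proof (``easy to verify''), and your contrapositive argument --- that deleting a vertex $v\in S$ with no neighbour in some flap $F$ still leaves $F$ with no edges to the rest of $G-S'$, so $S\setminus\{v\}$ separates --- is the intended justification. No gaps; the one point needing care (that the other side of the cut is nonempty) is handled.
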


\section{The main idea}
Let $G$ be a locally maximal 1-plane graph. The main idea for computing the vertex connectivity of $G$ is to show a correspondence between minimum separating sets of $G$ and shortest \textit{constrained separating cycles} in an auxiliary graph $\Lambda(G)$, called the \textit{radial planarisation} of $G$. We define these terms below.

For any plane graph, the \emph{radial graph} \cite{FT06} is the bipartite graph obtained by placing a \textit{face vertex} in every face and connecting it to every vertex incident with that face.  We use $R(G)$ to denote the radial graph of $G^\times$, and let the \emph{radial planarisation} $\Lambda(G)$ be the union of $G^\times$ and $R(G)$, embedded in such a way that the edges of the radial graph bisect the corresponding angles of $G^\times$ (Figure \ref{fig: 1-plane graph and radial planarisation}).

Not every separating cycle $X$ of $\Lambda(G)$ gives rise to a separating set of $G$ (for example, one flap could only contain dummy vertices), and so we must restrict further the separating cycles that we are searching for.
A cycle $X$ of $\Lambda(G)$ is called a \emph{constrained separating cycle} if it satisfies the following constraints:

\begin{enumerate}[label=($\Psi$\arabic*)]
    \item $\Lambda(G) \setminus X$ has at least two flaps that contain vertices of $G$.
    \item $X$ is a subgraph of $R(G)$.
    \item $X$ does not visit any dummy vertices of $\Lambda(G)$ (i.e., vertices that correspond to crossing points of $G$).
\end{enumerate}






Theorem \ref{thm: the main theorem} captures the correspondence between minimum separating sets of $G$ and shortest constrained separating cycles of $\Lambda(G)$, and is the main result of this paper.


\begin{theorem}\label{thm: the main theorem}
Let $G$ be a locally maximal 1-plane graph. Then $G$ has a separating set of size at most $k$ if and only if $\Lambda(G)$ has a constrained separating cycle of length at most $2k$.
\end{theorem}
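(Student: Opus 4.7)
\medskip

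\noindent\textbf{Proof plan.} I plan to prove the two directions separately: ($\Leftarrow$) by a short ``lifting'' argument and ($\Rightarrow$) by a topological construction that leans on local maximality.

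For ($\Leftarrow$), let $X$ be a constrained separating cycle of length $2k'\le 2k$. Since $R(G)$ is bipartite between face vertices and vertices of $G^\times$, and ($\Psi 2$)--($\Psi 3$) force $X$ to lie in $R(G)$ and to avoid dummies, the cycle $X$ alternates between face vertices and \emph{original} vertices of $G$; let $S$ be its $k'\le k$ original vertices. To show that $S$ separates $G$, I would use ($\Psi 1$) to pick flaps $F_1,F_2$ of $\Lambda(G)\setminus X$ each containing an original vertex $v_1,v_2$, and suppose toward contradiction that some path in $G-S$ connects $v_1$ to $v_2$. Each edge of that path lifts in $\Lambda(G)$ either to itself (if uncrossed) or to a length-$2$ path through a dummy (if crossed); neither kind of lift meets $X$, so the concatenation is a walk in $\Lambda(G)\setminus X$ from $v_1$ to $v_2$, contradicting the separation.

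For ($\Rightarrow$), let $S$ be a minimal separating set of size at most $k$ and fix two flaps $A,B$ of $G-S$. The goal is a cycle $X\subseteq R(G)$ of length at most $2|S|$ that encloses $A$ and excludes $B$. The underlying idea is to draw a Jordan curve $C$ in the plane that separates $A$ from $B$, passes through each $u\in S$ exactly once, and between consecutive $S$-vertices stays within the boundary of a single face of $G^\times$; the desired cycle $X$ is then obtained by replacing each arc of $C$ between consecutive $S$-vertices $u,u'$ by the length-$2$ path $u\to f\to u'$ in $R(G)$, where $f$ is the face vertex of that face. Observation~\ref{obs: min sep sets} (each vertex of $S$ has a neighbour in each flap) ensures that any such $C$ must indeed visit every $u\in S$, and it remains to verify (a) that $C$ can be kept in a single face between consecutive $S$-vertices, and (b) that this face has a face vertex adjacent in $R(G)$ to both $u$ and $u'$.

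Both (a) and (b) are where local maximality becomes essential. Whenever the face in question is split by a crossing, so that $C$ threatens to pass through a dummy, the $K_4$ hypothesis supplies a kite edge between the two crossing-endpoints the curve is travelling between, and one of the four kite faces of that crossing is the bounded region whose face vertex plays the role of $f$. This is exactly where locally maximal 1-plane graphs split off from the general 1-planar setting: the grid example in Figure~\ref{fig: grid graph and its planarisation} shows crossings without a $K_4$ at which no such bypass exists and the shortest enclosing cycle in $\Lambda(G)$ is forced to be long. The principal obstacle is therefore a careful case analysis of how the four $K_4$-endpoints of each crossing encountered by $C$ are distributed among $S$, $A$, and the other flaps, together with a verification that in every case a kite-face bypass can be used so that minimality of $S$ keeps the total number of face vertices inserted at most $|S|$; this yields $|X|\le 2|S|\le 2k$ together with the final separation of $A$ from $B$ in $\Lambda(G)$.
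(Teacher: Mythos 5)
Your ($\Leftarrow$) direction is sound and is essentially the paper's Lemma~\ref{lemma: easy direction of the main theorem}: lift a hypothetical $u$--$w$ path of $G-S$ into $\Lambda(G)$ (crossed edges becoming length-2 paths through dummies) and use ($\Psi 2$), ($\Psi 3$) to conclude the lift avoids $X$, plus bipartiteness of $R(G)$ for the factor of $2$. The problem is the ($\Rightarrow$) direction, where your text is a plan rather than a proof: you yourself write that ``it remains to verify (a) and (b)'' and that ``the principal obstacle is therefore a careful case analysis.'' That case analysis \emph{is} the theorem. The paper's entire Lemma~\ref{lemma: hard direction of the main theorem} is devoted to it: it marks the face vertices that either see an edge between two $S$-vertices or witness a flap transition, proves that every marked face vertex has two edges to $S$ (Claim~\ref{claim: marked face vertex degree 2}, which needs that non-kite faces of a locally maximal graph avoid dummy vertices), proves that between any two edges from $v\in S$ into distinct flaps there is a marked face vertex in the rotation $\rho_\Lambda(v)$ (Claim~\ref{claim: locally maximal case analysis}, whose crossed-edge case is exactly where the $K_4$ hypothesis forces the fourth crossing endpoint into $S$), and then closes a maximal alternating path into a cycle. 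Your proposal gestures at ``a kite-face bypass'' but never shows that the kite face in question actually has \emph{two} $S$-vertices on its boundary, which is the nontrivial step; without it the replacement of an arc of $C$ by a path $u\to f\to u'$ in $R(G)$ is not justified.

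A secondary concern: your construction demands a Jordan curve passing through \emph{each} $u\in S$ \emph{exactly once}, and you bound $|X|$ by charging one face vertex per element of $S$. That is a strictly stronger object than what is needed or what the paper produces. The paper's cycle only satisfies $V_G(X)\subseteq S$ (it may be as short as a 2-cycle through a single $S$-vertex, the case $i=k$ in the construction), which already gives length at most $2|S|\le 2k$; insisting on a simple closed curve through all of $S$ in face-respecting fashion is an extra existence claim you would also have to prove, and there is no argument in your proposal that such a curve exists or that the inserted face vertices are pairwise distinct (so that $X$ is a cycle rather than a closed walk). I would recommend dropping the ``through all of $S$ exactly once'' requirement and instead following the maximal-alternating-path strategy, which sidesteps both issues.
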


(In \cite{DBLP:journals/jgaa/Eppstein99}, Eppstein proves a similar theorem for planar graphs, and is the basis for computing vertex connectivity of planar graphs in linear time.) By Theorem \ref{thm: the main theorem}, separating sets of size $k$ in $G$ correspond to constrained separating cycles of length $2k$ in $\Lambda(G)$. (The factor 2 appears because a constrained separating cycle is a subgraph of $R(G)$ by $\Psi 2$.) Therefore, it suffices to compute a shortest constrained separating cycle in $\Lambda(G)$ to compute the vertex connectivity of $G$. It is easy to find such a shortest constrained cycle, based on the planar subgraph isomorphism algorithm developed by Eppstein in \cite{DBLP:journals/jgaa/Eppstein99}. We use $\Lambda(G)$ as the \textit{planar host graph} and a constrained separating cycle as the \textit{pattern graph}. Eppstein's algorithm searches the host graph for an instance of the pattern graph, and outputs the instance if it exists. His algorithm runs in linear time when the size of the pattern graph is bounded. Eppstein also explains briefly in his paper how to modify the algorithm so that it checks for constraints $(\Psi 1)$ and $(\Psi 2)$, and similar easy modifications can be done to ensure $(\Psi 3)$. Since a 1-planar graph is at most 7-connected, by Theorem \ref{thm: the main theorem}, there exists a shortest constrained separating cycle of $\Lambda(G)$ with length at most 14, and therefore of bounded size. Thus, a shortest constrained separating cycle of $\Lambda(G)$ can be computed in linear time. As we will see in Lemma~\ref{lemma: easy direction of the main theorem}, it is very easy to extract a minimum separating set from a constraint separating cycle. By Theorem \ref{thm: the main theorem}, we therefore have:

\begin{theorem}
Let $G$ be a locally maximal 1-plane graph. Then a minimum separating set of $G$ can be computed in linear time.
\end{theorem}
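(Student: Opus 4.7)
The plan is to reduce the computation of a minimum separating set of $G$ to a bounded-size subgraph isomorphism problem in a planar graph, and then invoke Eppstein's algorithm. First I would construct the radial planarisation $\Lambda(G)$ in linear time: since $G$ is 1-planar it has $O(n)$ edges and $O(n)$ crossings, so the planarisation $G^\times$ has $O(n)$ vertices and $O(n)$ faces, and $\Lambda(G)$ (together with its planar embedding inherited from $G^\times$) can be built in $O(n)$ time directly from the given 1-planar embedding of $G$ (augmented, if needed, with the $O(n)$ parallel kite edges described in the preliminaries).

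Next I would apply Theorem~\ref{thm: the main theorem} to translate the question: $G$ has a separating set of size at most $k$ if and only if $\Lambda(G)$ has a constrained separating cycle of length at most $2k$. Because every simple 1-planar graph has at most $4n-8$ edges, there is a vertex of degree at most $7$, so $\kappa(G)\leq 7$ and it suffices to search for a constrained separating cycle of length at most $14$, i.e.\ of bounded size.

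I would then iterate $k = 1, 2, \dots, 7$ and, for each $k$, use Eppstein's planar subgraph isomorphism algorithm with host graph $\Lambda(G)$ and pattern equal to a cycle of length $2k$, returning the smallest $k$ for which a valid instance exists. Eppstein already explains how to augment his algorithm to check that the matched cycle separates two prescribed vertices (handling $(\Psi 1)$) and that its edges all lie in the radial subgraph $R(G)$ (handling $(\Psi 2)$); an analogous modification enforces $(\Psi 3)$ by simply forbidding any match that visits a dummy vertex. Because the pattern has bounded size, each iteration runs in $O(n)$ time, and the seven iterations together are still linear.

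Finally, from the shortest constrained separating cycle $X$ returned, I would extract a minimum separating set of $G$ by the easy direction of the main theorem, Lemma~\ref{lemma: easy direction of the main theorem}. The genuinely hard step of the whole program is establishing Theorem~\ref{thm: the main theorem}; once that is in place, the present result is essentially bookkeeping on top of Eppstein's linear-time planar subgraph isomorphism routine, and the only non-trivial algorithmic care needed is verifying that constraints $(\Psi 1)$--$(\Psi 3)$ can be checked inside his framework without breaking linearity.
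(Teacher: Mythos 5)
Your proposal is correct and follows essentially the same route as the paper: reduce via Theorem~\ref{thm: the main theorem} to finding a shortest constrained separating cycle of length at most $14$ in $\Lambda(G)$, find it with Eppstein's linear-time planar subgraph isomorphism algorithm modified to enforce $(\Psi 1)$--$(\Psi 3)$, and extract the separating set via Lemma~\ref{lemma: easy direction of the main theorem}. The only additions you make (explicit linear-time construction of $\Lambda(G)$, iterating over $k=1,\dots,7$) are routine and consistent with the paper's argument.
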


\section{Proof of Theorem \ref{thm: the main theorem}}

The proof of Theorem \ref{thm: the main theorem} proceeds in two directions. 
For convenience, we introduce the following notation. Let $X$ be a separating cycle of $\Lambda(G)$. Then the set of vertices of $G$ on $X$ is denoted by $V_G(X)$. We prove the easy direction (the reverse direction) in the following lemma.

\begin{lemma}\label{lemma: easy direction of the main theorem}
Let $X$ be a constrained separating cycle of $\Lambda(G)$. Then $V_G(X)$ is a separating set of $G$.
\end{lemma}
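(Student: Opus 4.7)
The plan is to use a standard Jordan-curve / graph-separation argument, but carried out entirely at the level of the graph $\Lambda(G)$ (so we do not have to worry about what it means for a path to ``cross'' a cycle in the plane). The key observation is that a path in $G$ lifts naturally to a path in $\Lambda(G)$ whose vertex set is disjoint from the set of face vertices and whose only non-$G$-vertices are dummy vertices, which by $(\Psi 3)$ are not on $X$.

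First I would unpack what $V(X)$ looks like. Because $X \subseteq R(G)$ by $(\Psi 2)$ and $R(G)$ is bipartite with parts $V(G^\times)$ and the face vertices, $X$ is an even cycle whose vertices alternate between face vertices and vertices of $G^\times$. By $(\Psi 3)$ none of the latter are dummy vertices, so every non-face vertex of $X$ lies in $V(G)$; that is, $V(X) = V_G(X) \sqcup F$, where $F$ is the set of face vertices on $X$.

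Next I would use $(\Psi 1)$ to pick two vertices $u, v \in V(G)$ lying in distinct flaps of $\Lambda(G) \setminus X$; in particular $u, v \notin V(X)$, so $u, v \notin V_G(X)$. Suppose for contradiction that there is a $u$--$v$ path $P$ in $G - V_G(X)$. Lift $P$ to a walk $P^\times$ in $G^\times$ by replacing each crossed edge of $G$ with the two-edge subpath through its dummy vertex. Then $P^\times$ is a walk in $G^\times \subseteq \Lambda(G)$ from $u$ to $v$, and its internal vertices are either internal vertices of $P$ (vertices of $G$ not in $V_G(X)$) or dummy vertices. Since $u$ and $v$ lie in different flaps of $\Lambda(G) \setminus X$, the walk $P^\times$ must contain at least one vertex of $X$. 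But
\[
  V(P^\times) \cap V(X) \;\subseteq\; \bigl(V(G) \cup \{\text{dummy vertices}\}\bigr) \cap \bigl(V_G(X) \cup F\bigr) \;=\; V_G(X) \cap V(P^\times),
\]
using $(\Psi 3)$ to kill the dummy vertices and the fact that $P^\times$ has no face vertices to kill $F$. This intersection is empty by our assumption that $P$ avoids $V_G(X)$, a contradiction.

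Hence no such path exists, so $V_G(X)$ separates $u$ from $v$ in $G$, proving the lemma. I do not anticipate a real obstacle here: the content of the proof is entirely in the bookkeeping of which vertex types appear on $X$ versus on $P^\times$, and all three constraints $(\Psi 1)$--$(\Psi 3)$ enter exactly once (respectively to produce $u, v$, to force $X$ to alternate with face vertices, and to exclude dummy vertices from $X$). The only mild subtlety is to argue the lift $P^\times$ is a walk in $\Lambda(G)$ and not merely in $G^\times$, which is immediate since $G^\times$ is a subgraph of $\Lambda(G)$.
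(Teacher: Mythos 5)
Your proposal is correct and follows essentially the same argument as the paper's proof: pick two $G$-vertices in distinct flaps via $(\Psi 1)$, lift any $u$--$v$ path of $G$ to $G^\times \subseteq \Lambda(G)$ by routing crossed edges through their dummy vertices, and observe that the vertex where this lift meets $X$ cannot be a dummy vertex by $(\Psi 3)$ nor a face vertex, hence lies in $V_G(X)$. The only difference is cosmetic (you argue by contradiction and make the bipartite structure of $X$ explicit, while the paper argues directly).
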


\begin{proof}
From $(\Psi 1)$, there exist two vertices $u, w \in V(G)$ that belong to different flaps of $\Lambda(G) \setminus X$. Consider a simple path $P$ connecting $u$ and $w$ in $G$. Let $P'$ be the path in $\Lambda(G)$ corresponding to $P$ defined as follows: for each edge $e \in P$, (a) if $e$ is uncrossed, add $e$ to $P'$; (b) if $e$ is crossed, add the two edges of $\Lambda(G)$ corresponding to $e$ to $P'$. Note that $P'$ may contain both original vertices of $G$ and dummy vertices corresponding to crossing points. Since $X$ separates $u$ and $w$ in $\Lambda(G)$, the path $P'$ intersects $X$ at some vertex $v \in X$, where $v \notin \{u,w\}$. From $(\Psi 3)$, $v$ is not a dummy vertex of $\Lambda(G)$. Therefore, $v$ is a vertex of $G$, and hence $v \in V_G(X)$. Since $P$ is arbitrary, every path of $G$ connecting $u$ and $w$ contains a vertex of $V_G(X)$ in its interior. This shows that $V_G(X)$ is a separating set of $G$.
\end{proof}




We prove the forward direction of Theorem \ref{thm: the main theorem} in Lemma \ref{lemma: hard direction of the main theorem}. 

\begin{lemma}\label{lemma: hard direction of the main theorem}
Let $S$ be a separating set of $G$. Then $\Lambda(G)$ has a constrained separating cycle $X$ such that $V_G(X) \subseteq S$.
\end{lemma}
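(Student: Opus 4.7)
The plan is to construct the desired cycle $X$ by tracing the topological boundary of a thin neighborhood of one flap of $G^\times - S$ in the planar drawing of $\Lambda(G)$, relying on local maximality to ensure the trace avoids both dummy vertices and vertices outside $S$.

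First, I would assume without loss of generality that $S$ is a minimal separating set: if some minimal $S' \subseteq S$ yields a cycle $X$ with $V_G(X) \subseteq S'$, then $V_G(X) \subseteq S$ as well, and minimality additionally gives (by Observation \ref{obs: min sep sets}) that every vertex of $S$ has a neighbour in every flap. Pick two distinct flaps $A, B$ of $G-S$ and let $A^\times, B^\times$ denote the components of $G^\times - S$ containing $A$ and $B$. The key structural consequence of local maximality, which I would establish first, is that $A^\times = A \cup D_A$ where $D_A$ is the set of dummy vertices whose underlying crossing has at least one endpoint in $A$; moreover all four endpoints of any such crossing lie in $A \cup S$, since the $K_4$ condition prevents them from straddling different flaps. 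The same argument rules out any edge of $G^\times$ between $A^\times$ and $B^\times$, and shows that every dummy in $D_A$ is interior to the ``region of $A^\times$'' because its four kite faces (assumed present via the reduction in Section 2) have boundaries entirely in $A^\times \cup S$.

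Next, take the boundary $\gamma$ of a small $\epsilon$-neighborhood $N_\epsilon(A^\times)$ of $A^\times$ in the plane, and pick the component of $\gamma$ enclosing $A^\times$ while keeping $B^\times$ on its outside (this exists since $A^\times$ and $B^\times$ share no edges of $G^\times$). Reroute $\gamma$ so that whenever it enters a face $f$ of $G^\times$ it passes through the face vertex of $f$ in $\Lambda(G)$; the resulting closed walk $W$ alternates face vertices with $G^\times$-vertices along edges of $R(G)$. The vertex bumps of $\gamma$ occur only at vertices $v \notin A^\times$ having an incident edge into $A^\times$, and by the structural claim any such $v$ must lie in $S$: an edge from $A^\times$ to another flap would directly connect two flaps, and an edge from $A^\times$ to a dummy outside $A^\times$ would violate the $K_4$ around that dummy. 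Dummies in $D_A$ are interior and dummies outside $A^\times$ are unreachable by $\gamma$, so $W$ avoids all dummy vertices entirely.

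Finally, to obtain a simple cycle $X$ from $W$, I would split $W$ at any repeated vertices and pick a sub-cycle that still separates $A$ from $B$. The resulting $X$ satisfies $(\Psi 2)$ because it consists only of radial edges by construction, $(\Psi 3)$ because $W$ avoids dummies, and $V_G(X) \subseteq S$ because all non-face-vertex points of $W$ lie in $S$; condition $(\Psi 1)$ follows because $A$ (a set of $G$-vertices) lies in one flap of $\Lambda(G) \setminus X$ and $B$ (also $G$-vertices) in another. The main obstacle is this last splitting step: a vertex $v \in S$ whose neighbours in $A^\times$ appear in several non-consecutive blocks of its rotation forces $W$ to visit $v$ multiple times, and not every sub-cycle produced by splitting at $v$ will separate $A$ from $B$. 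Here I would combine the Jordan-curve property of the original $\gamma$ with the minimality of $S$ (so that every $v \in S$ has a neighbour in $B$) to argue that at least one simple sub-cycle of $W$ inherits the enclosing property and serves as the desired $X$.
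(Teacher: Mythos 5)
Your proposal takes a genuinely different route from the paper: the paper never thickens a flap, but instead marks certain face vertices, proves two local facts (Claims~\ref{claim: marked face vertex degree 2} and~\ref{claim: locally maximal case analysis}: every marked face vertex has two radial edges into $S$, and every $v\in S$ has a marked face vertex between its edges into two flaps), and then grows a maximal path alternating between $S$ and marked face vertices until it is forced to close into a cycle. Your central structural observation is correct and is exactly where local maximality enters: the $K_4$ condition forces every crossing with an endpoint in a flap $A$ to have all four endpoints in $A\cup S$, so the component $A^\times$ of $G^\times-S$ is $A$ plus its dummies and all $G^\times$-neighbours of $A^\times$ lie in $S$. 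A boundary-tracing proof along these lines can in principle be completed, and it would give an alternative to the paper's combinatorial argument.

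As written, however, there are two genuine gaps. First, a technical but real one: the boundary of $N_\epsilon(A^\times)$ never comes near a vertex $v\in S$ (such a $v$ sits at distance roughly an edge-length from the drawing of $A^\times$), so the ``vertex bumps'' at $S$ that your walk $W$ is built from do not occur. You must thicken $A^\times$ \emph{together with all edges of $G^\times$ incident to it}; this same inclusion is also what justifies the unproved assertion that $W$ alternates along edges of $R(G)$ --- since every edge of $G^\times$ at a vertex of $A^\times$ then lies inside the thickened set, the face of $G^\times$ containing the tracing curve can change only while the curve wraps around a vertex of $S$, which is what makes consecutive face vertices and $S$-vertices of $W$ adjacent in $R(G)$. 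Second, and more seriously, the step you yourself flag as ``the main obstacle'' --- extracting from the closed walk $W$ a \emph{simple} cycle that still separates $A$ from $B$ --- is left unresolved, and the tools you propose (minimality of $S$ so that every $v\in S$ has a neighbour in $B$, plus the Jordan property of $\gamma$) do not obviously close it. The standard way to finish is: the drawing of $W$ contains a closed curve separating $a\in A$ from $b\in B$, hence $a$ and $b$ lie in different faces of the plane subgraph $H\subseteq\Lambda(G)$ formed by the edges of $W$; and if two points lie in different faces of a plane graph $H$, then some cycle of $H$ already separates them (delete any edge not incident to both the face of $a$ and the face of $b$ and repeat; by Euler's formula what remains is a single cycle). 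With that cycle as $X$, conditions $(\Psi 1)$--$(\Psi 3)$ and $V_G(X)\subseteq S$ follow as you state. Until this extraction step is actually supplied, the proof is incomplete; note that the paper's path-growing construction avoids the issue entirely because it produces a simple cycle directly.
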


\begin{proof}
We can assume that $S$ is minimal, since otherwise we can always choose a subset of $S$ that is a minimal separating set. The plan is to take a maximal path that alternates between face vertices and vertices of $S$, and then to show that extending it would give a cycle that has a vertex of $G$ inside and a vertex of $G$ outside. Note that such a cycle satisfies all the three constraints $(\Psi 1)$, $(\Psi 2)$ and $(\Psi 3)$ with the property that $V_G(X) \subseteq S$.

To obtain such a cycle, we first mark face vertices of $\Lambda(G)$ that have some suitable properties. Let $\langle u,v,w \rangle$ be an angle in $G^\times$ such that $v \in S$. (Note that either $u$ or $w$ could be a dummy vertex of $\Lambda(G)$.) Let $f$ be the face vertex at the angle. If $u$ and $w$ are both vertices of $G$, and they belong to different flaps of $G-S$, then we mark $f$.  We also mark $f$ if one of $u,w$ is a vertex of $G$ and in $S$. Put differently, face vertex $f$ is marked if it either is incident to an edge connecting two vertices of $S$, or it witnesses a transition from one flap to another.

\begin{claim}\label{claim: marked face vertex degree 2}
Each marked face vertex 
is incident with at least two edges whose other endpoint is in $S$.
\end{claim}

\begin{proof}
If a face vertex $f$ is marked, then there is a vertex $v\in S$ and an angle $\langle u,v,w \rangle$ in $G^\times$ that satisfies the conditions for marking. If either $u \in S$ or $w \in S$, we are done. Otherwise, $u$ and $w$ are vertices of $G$ that are in different flaps of $G-S$; in particular they are not dummy vertices. Therefore
the face $F$ of $G$ corresponding to $f$ is not a kite face. 
Consider a directed walk along the boundary of $F$, starting from the edge $(v,w)$ in the direction $v$ to $w$, and ending at edge $(u,v)$. The walk does not see any dummy vertices since $F$ is not a kite face and the graph is locally maximal 1-planar. As $u$ and $w$ belong to different flaps, the walk (which begins in the flap of $w$ and ends in the flap of $u$) must visit some vertex of $S$ inbetween.  Thus the walk either sees a vertex of $S$ different from $v$, or sees $v$ again at another angle of the face. In either case, there must be two edges connecting $f$ to vertices of $S$.
\end{proof}

In Claim \ref{claim: locally maximal case analysis}, we show that each vertex $v$ of $S$ also has two incident edges connecting it to marked face vertices.   But in fact, we need a stronger claim, namely, that these two edges can be restricted within their location of $\rho(v)$ to lie 
on opposite sides of two edges to two flaps. To clarify this, we introduce some notation. Let $\rho_G(v)$ and $\rho_\Lambda(v)$ be the rotation at a vertex $v$ in $G$ and $\Lambda(G)$ respectively. Let $(u,v)$ be an edge of $G$. The notation $(u,\bar{v})$ refers to the edge in $\Lambda(G)$ that is incident with $u$ and corresponds to $(u,v)$ in $G$. That is, $\bar{v} = v$ if $(u,v)$ is uncrossed; else $\bar{v}$ is the dummy vertex on $(u,v)$.

\begin{claim}\label{claim: locally maximal case analysis}
Let $\phi_1$ and $\phi_2$ be two flaps of $G-S$. Let $(v,t_1)$ and $(v,t_2)$ be two edges such that $v \in S$, $t_1 \in \phi_1$ and $t_2 \in \phi_2$. Then there exist a marked face vertex $f$ such that $\langle (v,\bar{t_1}), (v,f), (v,\bar{t_2}) \rangle \subseteq \rho_\Lambda(v)$.
\end{claim}
 
\begin{proof}
Take the closest pair of edges $(v,w_1)$, $(v,w_2)$ that lie between $(v,t_1)$, $(v,t_2)$ in $\rho_G(v)$ such that $w_1 \in \phi_1$ and $w_2 \in \phi_2$. More precisely, take a pair of edges $(v,w_1),(v,w_2)$ such that $\langle (v,t_1)$, $(v,w_1)$, $(v,w_2)$, $(v,t_2) \rangle \subseteq \rho_G(v)$ (where possibly $t_1=w_1$ and/or $w_2=t_2$) and such that there is no vertex $v' \in \phi_1 \cup \phi_2$ with $\langle (v,w_1)$, $(v,v')$, $(v,w_2) \rangle \subseteq \rho_G(v)$. It suffices to show that there is a marked face vertex $f$ such that $\langle (v,\bar{w_1})$, $(v,f)$, $(v, \bar{w_2}) \rangle \subseteq \rho_\Lambda(v)$.
We have two cases:

\medskip\noindent\textbf{Case 1:
$(v,w_1)$ and $(v,w_2)$ are consecutive in $\rho_G(v)$.} We claim that both $(v,w_1)$ and $(v,w_2)$ are uncrossed. For contradiction, suppose that $(v,w_1)$ is crossed by an edge $(u,u')$. Then there exist kite edges $(v,u)$, $(v,u')$ before and after $(v,w_1)$ in $\rho_G(v)$. Hence (up to renaming) $u' = w_2$. Since $(w_1, u')$ is also a kite edge, $w_1$ and $w_2$ are adjacent, which contradicts that they are in different flaps. Therefore, $(v,w_1)$ is uncrossed, and similarly $(v,w_2)$ is also uncrossed. This implies that $\langle w_1, v, w_2 \rangle$ is an angle at $v$ in $G$. Since $w_1$ and $w_2$ belong to different flaps, the face vertex $f$ at this angle is marked as desired.

\medskip\noindent\textbf{Case 2: 
$(v,w_1)$ and $(v,w_2)$ are not consecutive in $\rho_G(v)$.} Then there is a vertex $v'$ such that $\langle (v,w_1), (v,v'), (v,w_2) \rangle \subseteq \rho_G(v)$. Choose $(v,v')$ to be the first edge that comes after $(v,w_1)$ in $\rho_G(v)$.
By the choice of $w_1$ and $w_2$, $v' \notin \phi_1 \cup \phi_2$. We have two subcases.
If $(v,v')$ is uncrossed, then $\langle w_1, v, v' \rangle$ is an angle at $v$ and $v$ is not a dummy-vertex. Since $v' \notin \phi_1$, it either belongs to $S$ or to some other flap; either way the face vertex $f$ at this angle is marked and we are done.  Now suppose that $(v,v')$ is crossed by an edge $(u,u')$ at $c$. Then there exist kite edges $(v,u)$ and $(v,u')$ before and after $(v,v')$ in $\rho_G(v)$. By the choice of $v'$, we have (up to renaming) $u = w_1$. So $u'$ is a neighbour of $w_1$, which means that it either belongs to the same flap $\phi_1$ or to $S$. In particular $u'\neq w_2$.   But this implies $u'\not\in \phi_1$ by choice of $w_1,w_2$, so $u'\in S$.
Therefore $\langle c,v,u' \rangle$ is an angle in $G^\times$ with $u', v \in S$ and the face vertex $f$ at this angle is marked as desired.
\end{proof}

\paragraph{Constructing the constrained separating cycle.}

We now show how to construct the desired constrained separating cycle in $\Lambda(G)$ using only vertices of $S$ and marked face vertices. Let $v_1$ be an arbitrary vertex of $S$. Let $P = v_1\dots v_k$ be a simple path that alternates between marked face vertices and vertices of $S$ and that is maximal in the following sense: $v_k \in S$, and 
for any marked face vertex $v_{k+1}$ and any vertex $v_{k+2}$ in $S$ the extension
$P \cup \{(v_k,v_{k+1}),(v_{k+1},v_{k+2})\}$ would not be a simple path. 
Since $v_k \in S$, there exist vertices $t_1 \in \phi_1$ and $t_2 \in \phi_2$ that are adjacent to $v_k$ (Observation \ref{obs: min sep sets}). By symmetry, we may assume that $\langle (v_k,\bar{t_2}), (v_k, v_{k-1}), (v_k,\bar{t_1}) \rangle \subseteq \rho_\Lambda(G)$. From Claim \ref{claim: locally maximal case analysis}, there is a marked face vertex $f$ such that $\langle (v_k,\bar{t_1}), (v_k,f), (v_k,\bar{t_2}) \rangle \subseteq \rho_\Lambda(G)$. 

\begin{figure}
  \centering
  \begin{subfigure}[b]{0.3\textwidth}
    \centering
    \includegraphics[scale = 0.85]{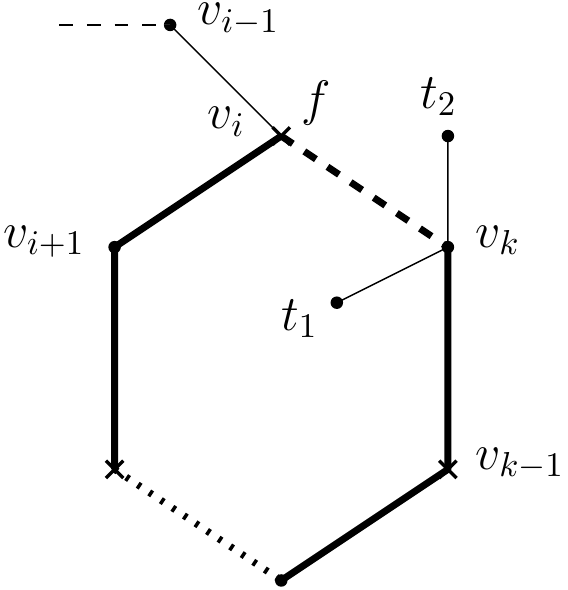}
     \caption{}
    \label{subfig: Separating cycles case 1}
  \end{subfigure}
  \begin{subfigure}[b]{0.3\textwidth}
    \centering
    \includegraphics[scale = 0.85]{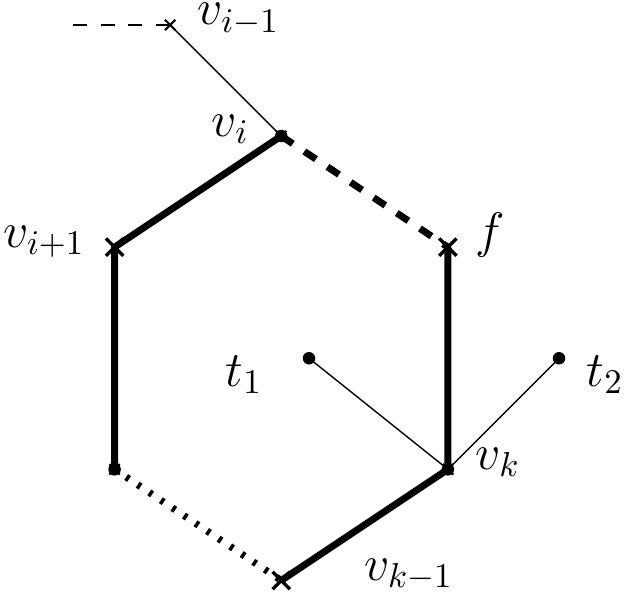}
    \caption{}
    \label{subfig: Separating cycles case 2}
  \end{subfigure}
  \begin{subfigure}[b]{0.3\textwidth}
    \centering
    \includegraphics[scale = 0.85]{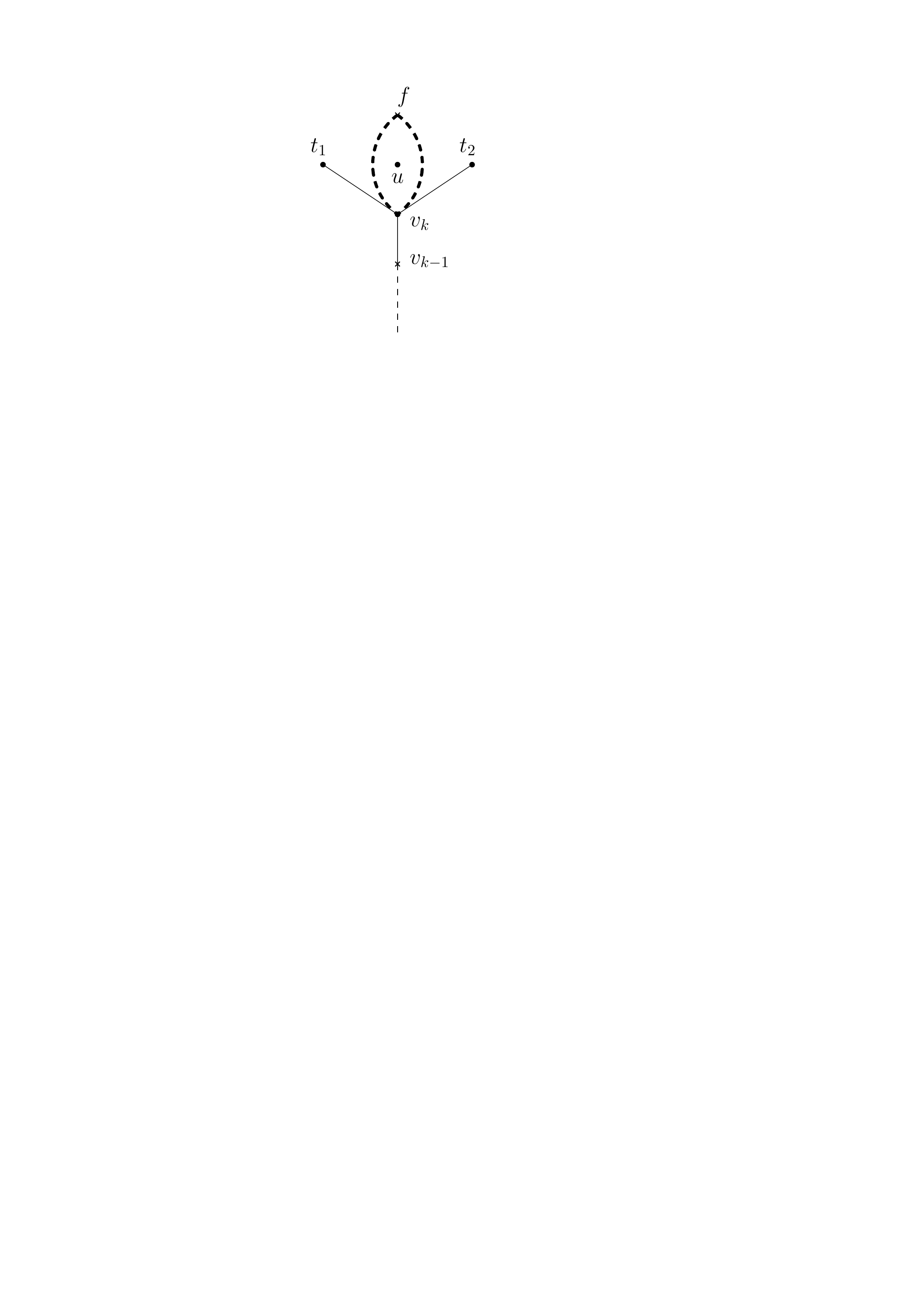}
    \caption{}
    \label{subfig: Separating cycles case 3}
  \end{subfigure}
  \caption{Constructing a constrained separating cycle in $\Lambda(G)$}
  \label{fig: Constructing a marked separating cycle}
\end{figure}

\begin{itemize}
    \item Suppose that $f = v_i\in P$, for some $1 \leq i \leq k-1$ (Figure \ref{subfig: Separating cycles case 1}). Then $X:= v_iv_{i+1}\dots v_kv_i$ is a cycle that separates $t_1$ and $t_2$ (since they are on either side of the cycle).

    \item Suppose that $f \notin P$. 
    By Claim \ref{claim: marked face vertex degree 2}, $f$ has two edges $e,e'$ to vertices of $S$. We may assume that $e$ is the edge $(v_k,f)$ that brought us to $f$, and consider extending $P$ via $e$ and $e'$.  Since this gives a non-simple path by choice of $P$, we have $e'=(f,v_i)$ 
for some $1 \leq i \leq k$. If $1 \leq i \leq k-1$ (Figure \ref{subfig: Separating cycles case 2}), then $X := v_iv_{i+1}\dots v_kfv_i$ is a cycle that separates $t_1$ and $t_2$. If $i = k$, then $e$ and $e'$ form a 2-cycle, and there must exist a vertex $u \in G$ inside the 2-cycle 
because the radial graph $R(G)$ does not contain bigons  (Figure \ref{subfig: Separating cycles case 3}). 
Therefore, $X:= v_kfv_k$ is a cycle separating $u$ from $t_1$ (and $t_2$).
\end{itemize}

Note that by construction, $X$ separates two vertices of $G$, is a subgraph of $R(G)$, does not visit dummy vertices, and all vertices of $G$ on $X$ belong to $S$. This proves the lemma.
\end{proof}

\section{Outlook}

In this paper, we studied how to compute the vertex connectivity of a locally maximal 1-plane graph $G$.  We showed that any minimum separating set $S$ corresponds to a separating cycle of length $2|S|$ (and with some other properties) in $\Lambda(G)$. By appealing to known subgraph isomorphism results, we can find such a cycle, and with it a minimum separating set, in linear time.

\begin{figure}
  \centering
  \begin{subfigure}[b]{0.4\textwidth}
    \centering
    \includegraphics[width=\textwidth]{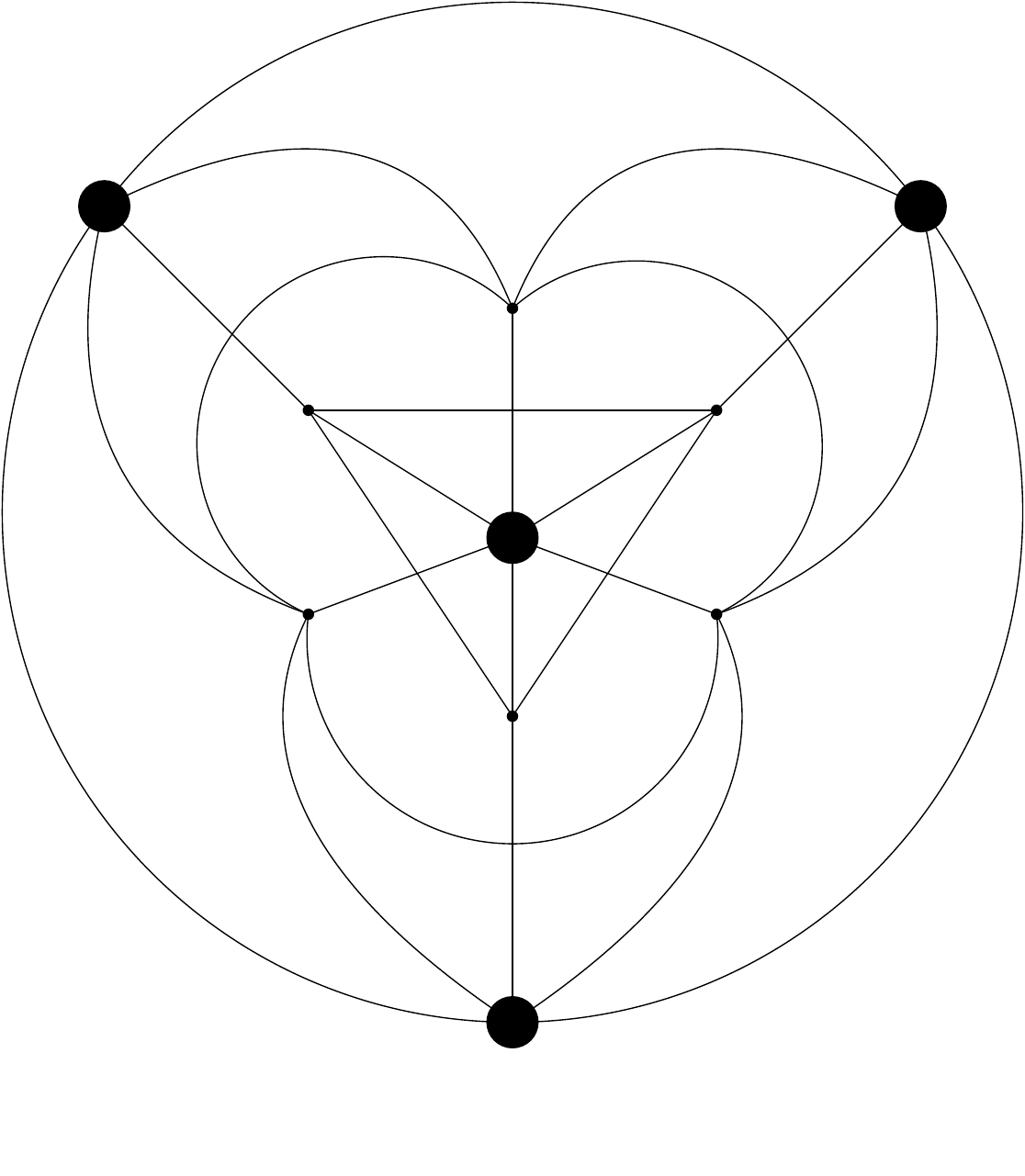}
     \caption{}
    \label{subfig: arrow crossings}
  \end{subfigure}\hspace{0.1\textwidth}
  \begin{subfigure}[b]{0.4\textwidth}
    \centering
    \includegraphics[width=\textwidth]{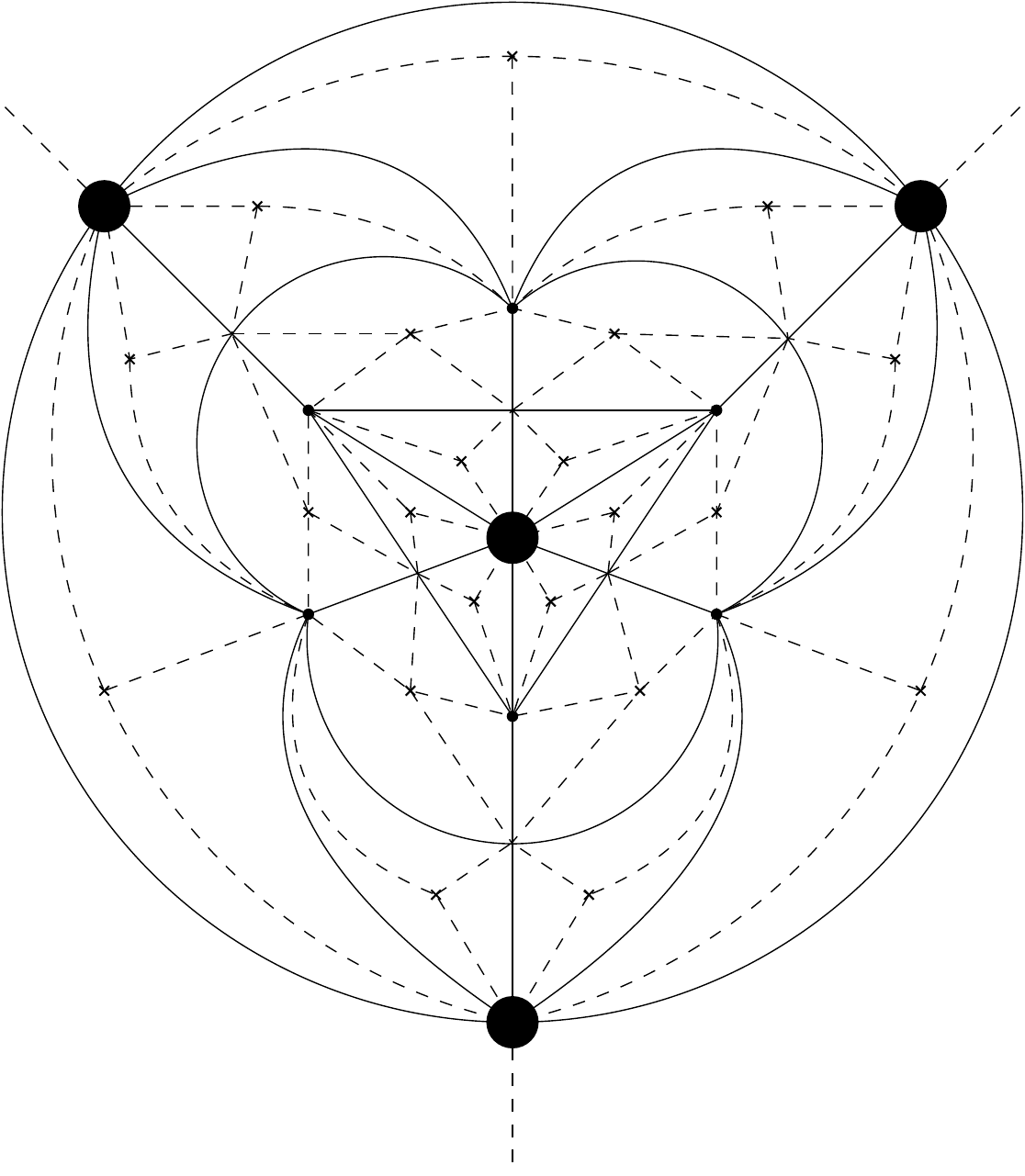}
    \caption{}
    \label{subfig: planarised arrow crossing}
  \end{subfigure}
  \caption{An example of a 4-connected 1-plane graph that does not have a separating 8-cycle in its radial planarisation passing through a minimum separating set (bold vertices).}
  \label{fig: arrow crossings and its planarisation}
\end{figure}

The most natural open question is to extend the result to more subclasses of 1-planar graphs.  We required that the 1-plane graph was locally maximal, i.e., for every crossing all four kite edges exist.  We believe that the techniques used here can be extended (after a slight modification to $\Lambda(G)$) even to the case when there are only three kite edges, and even to the case when there are only two kite edges and they do not have a common endpoint.  (Details will be given in a forthcoming publication.)  The case when there are only two kite edges with a common endpoint appears problematic, since the example in Figure \ref{fig: arrow crossings and its planarisation} shows that (at least for our way of defining the radial planarisation) there exists a minimum separating set of size 4 (bold vertices), but no 8-cycle in the radial planarisation connecting them.  Even more problematic is the situation when there are no kite edges at all, as illustrated in Figure \ref{fig: grid graph and its planarisation}.  We suspect that radically different techniques, not based on radial planarisation, will be needed to find the minimum separating set of all 1-plane graphs in $O(n)$ time.

Finally, looking more broadly, can we find linear-time connectivity testing algorithms for other graph classes that are defined via some geometric properties, such as $k$-planar graphs, fan-planar graphs, or sphere-of-influence graphs?

\bibliographystyle{alpha}  
\bibliography{references}

\end{document}